\definecolor{webgreen}{rgb}{0,.5,0}
\definecolor{webbrown}{rgb}{.8,0,0}
\definecolor{emphcolor}{rgb}{0.95,0.95,0.95}
\ifpdf \hypersetup{pdftex,
%             pdftitle={Decision Making with Poisson process},
%             pdfauthor={Semih Sezer},
            pdfstartview=FitH, %%Fit, FitB, FitH
            bookmarksopen=true,
            bookmarksnumbered=true
} \else \hypersetup{dvips} \fi
\numberwithin{equation}{section}
\newtheorem{proposition}{Proposition}[section]
\newtheorem{remark}{Remark}[section]
\newtheorem{assump}{Assumption}[section]
\numberwithin{remark}{section} \numberwithin{proposition}{section}
\numberwithin{corollary}{section}
\newcommand {\R}{\mathbb{R}}
\newcommand{\lev}{L\'{e}vy }
\title[Cash Management and Control Band Policies for spectrally one-sided L\'{e}vy processes]{Cash Management and Control Band Policies for spectrally one-sided L\'{e}vy processes}
\thanks{This version: \today. }
\thanks{$\dagger$\,  Department of Mathematics,
Faculty of Engineering Science, Kansai University, 3-3-35 Yamate-cho, Suita-shi, Osaka 564-8680, Japan. Email: \mbox{{\em
kyamazak@kansai-u.ac.jp}}.  Tel: +81-6-6368-1527. }
\author[K. Yamazaki]{Kazutoshi Yamazaki$^\dagger$}
\date{}
\begin{document}

\begin{abstract}
We study the control band policy arising in the context of cash balance management. A policy is specified by four parameters $(d,D,U,u)$. The controller pushes the process up to $D$ as soon as it goes below $d$
and pushes down to $U$ as soon as it goes above $u$, while he does not intervene whenever it is within the set $(d, u)$.  We focus on the case when the underlying process is a spectrally one-sided L\'{e}vy process and obtain the expected fixed and proportional controlling costs as well as the holding costs under the band policy.
\end{abstract}

\maketitle
{\noindent \small{\textbf{Keywords:}\, cash balance management;  impulse control; \lev processes; scale functions}\\
\noindent \small{\textbf{Mathematics Subject Classification (2010):}\, 60G51, 93E20, 49J40}}

\section{Introduction}
In a cash balance management problem, one continuously monitors and modifies the cash balance that fluctuates stochastically over time.
  In a most general model, a controller is allowed, at a cost, to both increase and decrease the balance so as to prevent the excess and shortage. The excess and shortage costs, collectively called the \emph{holding costs}, are modeled by (typically a convex) function of the balance integrated over time. The \emph{controlling costs} consist of fixed and proportional costs, where the former is incurred at each adjustment whereas the latter is proportional to the adjustment quantity. The objective is to minimize the sum of expected values of these costs.
      
In most of the existing literature, the common goal is to show the optimality of the \emph{band policy} that is  specified by four parameters $(d,D,U,u)$ such that $d < u$ and $D,U \in (d,u)$: the controller pushes the balance up to $D$ as soon as it goes below $d$
and pushes down to $U$ as soon as it exceeds $u$; he does not intervene whenever it is within the set $[d, u]$. To our best knowledge, the existing optimality results are limited only for the Brownian motion (with a drift) case.  In particular, Constantinides and Richard \cite{constantinides1978existence}, Harrison and Taylor \cite{MR0469463}, Harrison et al.\ \cite{MR716124} solve for the linear holding cost case; Buckley and Korn \cite{buckley1998optimal} solve for  the quadratic holding cost case.

In this paper, we study the band policy of the same form by generalizing the underlying process to a class of spectrally negative L\'{e}vy processes; namely, the cash balance, in the absence of control, follows a general L\'{e}vy process with only negative jumps.  We obtain the associated net present values (NPV) of the total discounted controlling costs as well as those of the holding costs.  While it is out of scope of this paper, its potential application lies in obtaining the solution to the cash management problem by choosing appropriately the values of  $(d, D, U, u)$ and show the quasi-variational inequalities (QVI) of Bensoussan and Lions \cite{Bensoussan_Lions_1984}.

While the inclusion of jumps makes the problem significantly harder, there have recently been several results on related stochastic control problems.  In particular, there are two special cases of the cash balance management problem that have been solved analytically for a general spectrally negative L\'{e}vy process.  First, under the additional constraint that the process can only be augmented, a two-parameter band policy, known as the $(s,S)$-policy, has been shown to be optimal by Yamazaki \cite{Yamazaki_2013} (as a generalization of the previous results by \cite{Bensoussan_2009, Bensoussan_2005} for processes with compound Poisson jumps).  Second, in the absence of fixed controlling costs, Baurdoux and Yamazaki \cite{baurdoux_yamazaki_2014} show the optimality of another two-parameter band policy where the optimally controlled process becomes a doubly reflected L\'{e}vy process of  \cite{Avram_2007, Pistorius_2003}.  For other stochastic control problems where the optimal policy is characterized by two parameters, we refer the reader to \cite{Bayraktar_2013,Loeffen_2009_2} for optimal dividend problems with fixed transaction costs and \cite{Leung_Yamazaki_2011, Hernandez_Yamazaki_2013} for two-player stochastic games.

The objective of this paper is to obtain semi-analytical expressions of the NPV's of the total discounted costs associated with the band policy.  Following the same paths of the above mentioned papers, we use the scale function to  efficiently write these quantities.  We expect these expressions to be beneficial in solving the cash management problem; the forms written in terms of the scale function can potentially help one to analyze the smoothness of the value function and to verify the optimality of a candidate band policy.

The rest of the paper is organized as follows.  Section \ref{section_model} reviews the spectrally negative L\'{e}vy process, the band policy, and the scale function.  Sections \ref{section_controlling_costs}
 and \ref{section_holding_costs} obtain, using the scale function, the NPV's of the controlling and holding costs, respectively. Section \ref{section_conclusion} concludes the paper with discussions on its contributions as well as potential challenges in its application in cash management problems.

\section{Mathematical Formulation} \label{section_model}

Let $(\Omega, \mathcal{F}, \mathbb{P})$ be a probability space hosting a \emph{spectrally negative L\'{e}vy process} $X = \left\{X_t; t \geq 0 \right\}$ whose \emph{Laplace exponent} is given by
\begin{align}
\psi(s)  := \log \mathbb{E} \left[ e^{s X_1} \right] =  c s +\frac{1}{2}\sigma^2 s^2 + \int_{(-\infty,0)} (e^{s z}-1 - s z 1_{\{-1 < z < 0\}}) \nu ({\rm d} z), \quad s \geq 0, \label{laplace_spectrally_positive}
\end{align}
where $\nu$ is a L\'{e}vy measure with the support $(-\infty,0)$ that satisfies the integrability condition $\int_{(-\infty,0)} (1 \wedge z^2) \nu({\rm d} z) < \infty$.  It has paths of bounded variation if and only if $\sigma = 0$ and $\int_{(-1,0)}|z|\, \nu({\rm d} z) < \infty$; in this case, we write \eqref{laplace_spectrally_positive} as
\begin{align*}
\psi(s)   =  \delta s +\int_{(-\infty,0)} (e^{s z}-1 ) \nu ({\rm d} z), \quad s \geq 0,
\end{align*}
with $\delta := c - \int_{(-1,0)}z\, \nu({\rm d} z)$.  We exclude the case in which $X$ is the negative of a subordinator (i.e., $X$ has monotone paths a.s.). This assumption implies that $\delta > 0$ when $X$ is of bounded variation. 
Let $\mathbb{P}_x$ be the conditional probability under which $X_0 = x$ (also let $\mathbb{P} \equiv \mathbb{P}_0$), and let $\mathbb{F} := \left\{ \mathcal{F}_t: t \geq 0 \right\}$ be the filtration generated by $X$.  

Fix $(d,D,U,u)$ such that $d < u$ and $D,U \in (d,u)$.   We consider adjusting the process $X$ by adding and subtracting the processes $R \equiv R(d, D,U,u)$ and $L \equiv L(d,D,U,u)$, respectively;  the resulting controlled process becomes:
\begin{align*}
A_t = A_{t}(d, D, U, u) := X_t  + R_t - L_t, \quad t \geq 0.
\end{align*}
The process $R$ pushes the process up to $D$ as soon as it goes below $d$
while the process $L$ pushes it down to $U$ as soon as it goes above $u$.  We consider the right-continuous versions for $R$ and $L$.  For the sake of completeness, we construct the processes as follows.  In doing so, we also define an auxiliary process
\begin{align*}
\tilde{A}_t := A_{t-} + \Delta X_t, = A_t - (\Delta R_t - \Delta L_t), \quad t \geq 0,
\end{align*}
which can be understood as the \emph{pre-controlled} process that does not reflect at $t$ the adjustments made by the processes $R_t$ and $L_t$.  Here and throughout, let $\Delta \xi_t := \xi_t - \xi_{t-}$, for any right-continuous process $\xi$.
%
%
%.  inductively by the algorithm given below.   The base case with $m=M$ holds . 
%
%Furthermore, $J_{m-1}^{(M)}$ admits the form \eqref{J_I} with $\II^*_{m-1}$. 
%
%The desired $J^{(M)} \equiv J_1^{(M)}$ 

\begin{center}
\line(1,0){300}
\end{center}

 \textbf{Construction of the processes $A$, $\tilde{A}$, $L$ and $R$}
\begin{description}
\item[Step 1] Set $A_{0-}=\tilde{A}_{0} = x$ and $L_{0-} = R_{0-} = 0$.
\begin{description}
\item[Step 1-1] 
If $d \leq x \leq u$, set 
\begin{align*}
A_0  = x \quad \textrm{and} \quad L_0 = R_0 = 0. 
\end{align*}
If $x < d$, set 
\begin{align*}
A_0 =  D, \quad L_0 =  0, \quad  \textrm{and} \quad R_0 = D-x. 
\end{align*}
If $x > u$, set 
\begin{align*}
A_0 =  U, \quad L_0 =  x-U, \quad  \textrm{and} \quad R_0 = 0. 
\end{align*}
\item[Step 1-2] Set $n = 0$ and define $T^{(0)} = 0$.
\end{description}
\item[Step 2]  
\begin{description}
\item[Step 2-1]Set
\begin{align*}
\tilde{A}_t = A_{T^{(n)}} + (X_t - X_{T^{(n)}}), \quad T^{(n)} < t \leq T^{(n+1)} := T_u^{(n+1)+} \wedge T_d^{(n+1)-}
\end{align*}
where we define \begin{align*}
T_u^{(n+1)+} &:= \inf \left\{ t \geq T^{(n)}: \tilde{A}_t >  u \right\}, \\  T_d^{(n+1)-} &:= \inf \left\{ t \geq T^{(n)}: \tilde{A}_t < d \right\}.
\end{align*}
\item[Step 2-2]Set $A_t = \tilde{A}_t$,  $R_t = R_{T^{(n)}}$ and $L_t = L_{T^{(n)}}$ for $T^{(n)} < t < T^{(n+1)}$ and
\begin{align*}
A_{T^{(n+1)}} &= \left\{ \begin{array}{ll} U, & \textrm{ if } \; T^{(n+1)} = T_u^{(n+1)+}, \\ D, & \textrm{ if } \; T^{(n+1)} = T_d^{(n+1)-}, \end{array} \right. \\
R_{T^{(n+1)}} &= \left\{ \begin{array}{ll} R_{T^{(n)}}, & \textrm{ if } \; T^{(n+1)} = T_u^{(n+1)+}, \\ R_{T^{(n)}} + (D-\tilde{A}_{T^{(n+1)}}), & \textrm{ if } \; T^{(n+1)} = T_d^{(n+1)-}, \end{array} \right. \\
L_{T^{(n+1)}} &= \left\{ \begin{array}{ll} L_{T^{(n)}} + (u-U), & \textrm{ if } \; T^{(n+1)} = T_u^{(n+1)+}, \\ L_{T^{(n)}}, & \textrm{ if } \; T^{(n+1)} = T_d^{(n+1)-}. \end{array} \right.
\end{align*}
\item[Step 2-3] Increment the value of $n$ by $1$ and go back to \textbf{Step 2-1}.
\end{description}
%\red{At this stage, 
%\begin{align*}
%&J_{m-1}(x)  \\
%&=\sup_{\tau}  \mathbb{E}^x \Big[ e^{-q \tau}
%g_{\hat{\II}} (X_{\tau})  1_{\{\tau < \infty \}}  +   \int_0^{\tau} e^{-qt}  f_{\hat{\II}}(X_t) {\rm d} t \\
%&\qquad + \sum_{k=i}^{L(m)} \Big(  e^{-q (\tau(k,m) \circ \theta_\tau)}
%g_{I(k,m)}(X_{\tau(k,m) \circ \theta_\tau} )  1_{\{\tau(k,m) \circ \theta_\tau < \infty \}}  +    \int_{0}^{\tau(k,m) \circ \theta_\tau} e^{-qt}  f_{I(k,m)}(X_t) {\rm d} t \Big)  \Big].
%\end{align*}}
\end{description}
\begin{center}
\line(1,0){300}
\end{center}

In the algorithm above, the processes are first initialized in \textbf{Step 1}.  In the constructions in \textbf{Step 2},
the process $R_t$ (resp.\ $L_t$) stays constant while the pre-controlled process $\tilde{A}_t$ remains on $[d, \infty)$ (resp.\ $(-\infty, u]$), and it increases by $D- \tilde{A}_{t}$ (resp.\ $\tilde{A}_t - U$) as soon as $\tilde{A}$ enters $(-\infty, d)$ (resp.\ $(u, \infty)$).  By construction, $R$ and $L$ are non-decreasing a.s.\ and the controlled process $A_{t}$ always remains on the interval $[d,u]$.  It is easy to see that these processes are $\mathbb{F}$-adapted; in particular, the processes $A$ and $\tilde{A}$ are strong Markov processes.

\subsection{Scale functions}  We conclude this section with a brief review on the scale function.

Fix $q > 0$. For any spectrally negative L\'{e}vy process, there exists a function called  the  $q$-scale function 
\begin{align*}
W^{(q)}: \R \rightarrow [0,\infty), 
\end{align*}
which is zero on $(-\infty,0)$, continuous and strictly increasing on $[0,\infty)$, and is characterized by the Laplace transform:
\begin{align*}
\int_0^\infty e^{-s x} W^{(q)}(x) {\rm d} x = \frac 1
{\psi(s)-q}, \qquad s > {\Phi(q)},
\end{align*}
where
\begin{equation}
{\Phi(q)} :=\sup\{\lambda \geq 0: \psi(\lambda)=q\}. \notag
\end{equation}
Here, the Laplace exponent $\psi$ in \eqref{laplace_spectrally_positive} is known to be zero at the origin and convex on $[0,\infty)$; therefore ${\Phi(q)}$ is well defined and is strictly positive as $q > 0$.   We also define, for $x \in \R$,
\begin{align*}
\overline{W}^{(q)}(x) &:=  \int_0^x W^{(q)}(y) {\rm d} y, \\
Z^{(q)}(x) &:= 1 + q \overline{W}^{(q)}(x),  \\
\overline{Z}^{(q)}(x) &:= \int_0^x Z^{(q)} (z) {\rm d} z = x + q \int_0^x \int_0^z W^{(q)} (w) {\rm d} w {\rm d} z.
\end{align*}
Because $W^{(q)}$ is uniformly zero on the negative half line, we have $Z^{(q)}(x) = 1$ and $\overline{Z}^{(q)}(x) = x$ for $x \leq 0$.

Let us define the first down- and up-crossing times, respectively, of $X$ by
\begin{align}
\label{first_passage_time}
\tau_b^- := \inf \left\{ t \geq 0: X_t < b \right\} \quad \textrm{and} \quad \tau_b^+ := \inf \left\{ t \geq 0: X_t >  b \right\}, \quad b \in \R.
\end{align}
Then, for any $b > 0$ and $x \leq b$,
\begin{align}
\label{two_sided_exit}
\mathbb{E}_x \left[ e^{-q \tau_b^+} 1_{\left\{ \tau_b^+ < \tau_0^- \right\}}\right] = \frac {W^{(q)}(x)}  {W^{(q)}(b)} \quad \textrm{and} \quad
 \mathbb{E}_x \left[ e^{-q \tau_0^-} 1_{\left\{ \tau_b^+ > \tau_0^- \right\}}\right] = Z^{(q)}(x) -  Z^{(q)}(b) \frac {W^{(q)}(x)}  {W^{(q)}(b)}.
\end{align}
%By taking limits on the latter,
%\begin{align} \label{one_sided_exit}
% \mathbb{E}_x \left[ e^{-q \tau_0^-} \right] &= Z^{(q)}(x) -  \frac q {\Phi(q)} W^{(q)}(x), \quad x \in \R.
%\end{align}
In addition, as in Theorem 8.7 of \cite{Kyprianou_2006}, for any measurable function $f$ bounded on $[d,u]$, we have
\begin{align}
\mathbb{E}_x \left[ \int_0^{\tau_{d}^- \wedge \tau^+_u} e^{-qt} f(X_t){\rm d} t\right] &= \varphi_d(u;f) \frac {W^{(q)}(x-d)}  {W^{(q)}(u-d)}  - \varphi_d(x;f),  \label{resolvent_formula}
\end{align}
where
\begin{align} \label{def_psi_phi}
\varphi_{d'} (x';f) &:= \int_{d'}^{x'} W^{(q)} (x'-y) f(y) {\rm d} y, \quad d',x' \in \R.
\end{align}

%Fix $\lambda \geq 0$ and define $\psi_\lambda(\cdot)$ as the Laplace exponent of $X$ under $\p^\lambda$ with the change of measure 
%\begin{align*}
%\left. \frac {{\rm d} \p^\lambda} {{\rm d} \p}\right|_{\mathcal{F}_t} = \exp(\lambda X_t - \psi(\lambda) t), \quad t \geq 0; 
%\end{align*}
%see page 213 of \cite{Kyprianou_2006}.
%% for all $s > -\lambda$,
%%\begin{align*}
%%\psi_\lambda(s) :=\Big(  \lambda \sigma^2  + c - \int_0^1 u (e^{-\lambda u}-1) \nu({\rm d} u)\Big) s
%%+\frac{1}{2}\sigma^2 s^2 +\int_0^\infty (e^{- s u}-1 + s u 1_{\{ 0 < u < 1 \}}) e^{-\lambda u}\,\nu({\rm d} u).
%%\end{align*}
% Suppose $W_\lambda^{(q)}$ and $Z_\lambda^{(q)}$ are the scale functions associated with $X$ under $\p^\lambda$ (or equivalently with $\psi_\lambda(\cdot)$).  
%Then, by Lemma 8.4 of \cite{Kyprianou_2006}, $W_\lambda^{(q-\psi(\lambda))}(x) = e^{-\lambda x} W^{(q)}(x)$, $x \in \R$,
%which is well defined even for $q \leq \psi(\lambda)$ by Lemmas 8.3 and 8.5 of \cite{Kyprianou_2006}.  In particular, we define
%\begin{align}
%W_{\Phi(q)}(x) := W_{\Phi(q)}^{(0)}(x) = e^{-\Phi(q) x} W^{(q)}(x), \quad x \in \R, \label{scale_function_version}
%\end{align}
%which is known to be an increasing function and,  as in Lemma 3.3 of \cite{Kuznetsov2013}, 
%\begin{align}
%W_{\Phi(q)} (x) \nearrow \psi'(\Phi(q))^{-1} \quad \textrm{as } x \rightarrow \infty. \label{W_q_limit}
%\end{align}

\begin{remark} \label{remark_smoothness_zero}
\begin{enumerate}
\item If $X$ is of unbounded variation or the L\'{e}vy measure is atomless, it is known that $W^{(q)}$ is $C^1(\R \backslash \{0\})$; see, e.g.,\ \cite{Chan_2009}.  Hence, 
\begin{enumerate}
\item $Z^{(q)}$ is $C^1 (\R \backslash \{0\})$ and $C^0 (\R)$ for the bounded variation case, while it is $C^2(\R \backslash \{0\})$ and $C^1 (\R)$ for the unbounded variation case, and
\item $\overline{Z}^{(q)}$ is $C^2(\R \backslash \{0\})$ and $C^1 (\R)$ for the bounded variation case, while it is $C^3(\R \backslash \{0\})$ and $C^2 (\R)$ for the unbounded variation case.
\end{enumerate}
\item Regarding the asymptotic behavior near zero, as in Lemmas 4.3 and 4.4 of \cite{Kyprianou_Surya_2007}, 
\begin{align}\label{eq:Wq0}
W^{(q)} (0) &= \left\{ \begin{array}{ll} 0, & \textrm{if $X$ is of unbounded
variation,} \\ \frac 1 {\delta}, & \textrm{if $X$ is of bounded variation,}
\end{array} \right. \\
\label{eq:Wqp0}
W^{(q)'} (0+) &:= \lim_{x \downarrow 0}W^{(q)'} (x) =
\left\{ \begin{array}{ll}  \frac 2 {\sigma^2}, & \textrm{if }\sigma > 0, \\
\infty, & \textrm{if }\sigma = 0 \; \textrm{and} \; \nu(-\infty,0) = \infty, \\
\frac {q + \nu(-\infty, 0)} {\delta^2}, &  \textrm{if }\sigma = 0 \; \textrm{and} \; \nu(-\infty, 0) < \infty.
\end{array} \right.
\end{align}
\item As in (8.18) and Lemma 8.2 of \cite{Kyprianou_2006},
\begin{align*}
\frac {W^{(q)'}(y+)} {W^{(q)}(y)} \leq \frac {W^{(q)'}(x+)} {W^{(q)}(x)},  \quad  y > x > 0.
\end{align*}
In all cases, $W^{(q)'}(x-) \geq W^{(q)'}(x+)$ for all $x >0$.
\end{enumerate}
\end{remark}

\section{Controlling costs} \label{section_controlling_costs}
%Define
%\begin{align*}
%U_t = X_t - L_t + R_t, \quad t \geq 0.\,
%\end{align*}
%and
%\begin{align*}
%v(x; a, A, b, B) &:= \mathbb{E}_x  \Big[ \sum_{0 \leq s \leq t} e^{-qs} (c_L \Delta L_s + k_L) 1_{\{ \Delta L_s > 0\}}  \Big], \\
%u(x; a, A, b, B) &:= \mathbb{E}_x  \Big[ \sum_{0 \leq s \leq t} e^{-qs} \Delta (c_R R_s  + k_R)1_{\{ \Delta R_s > 0\}}  \Big].
%\end{align*}
%Define
%\begin{align*}
%\Theta(d, D, U,u) :=  Z^{(q)}(u-d)-  Z^{(q)}(U-d)  - \frac {W^{(q)}(u-d)-W^{(q)}(U-d)}  {W^{(q)}(D-d)}  q \overline{W}^{(q)}(D-d).
%\end{align*}
%We have
%\begin{align*}
%\frac \partial {\partial A} \frac 1 {\Theta(a,A, b,B)} = - \frac {\frac \partial {\partial A}\Theta(a,A, b,B)} {\Theta(a,A, b,B)^2}
%\end{align*}

In this section, we compute the controlling costs given by, for all $x \in \R$, 
\begin{align}
v_L(x) &:= \mathbb{E}_x  \Big[ \sum_{0 \leq s \leq t} e^{-qs} (\Delta L_s + k_L) 1_{\{ \Delta L_s > 0\}}  \Big],  \label{v_L}\\
v_R(x) &:= \mathbb{E}_x  \Big[ \sum_{0 \leq s \leq t} e^{-qs}  (\Delta R_s  + k_R)1_{\{ \Delta R_s > 0\}}  \Big], \label{v_R}
\end{align}
for given constants $k_L, k_R \in \R$.   Throughout, we fix $(d,D,U,u)$ such that $d < u$ and $D,U \in (d,u)$.

We shall write these in terms of the scale function as reviewed above.
Because both $W^{(q)}$ and $\overline{W}^{(q)}$ are nondecreasing, we can define the measures $W^{(q)} ({\rm d} x)$ and $\overline{W}^{(q)} ({\rm d} x)$ such that, for any $y > x > 0$, 
\begin{align*}
W^{(q)}(x,y) = W^{(q)} (y) - W^{(q)} (x) \quad \textrm{and} \quad \overline{W}^{(q)}(x,y) = \overline{W}^{(q)} (y) - \overline{W}^{(q)} (x).
\end{align*}
 Let us also define
\begin{align*}
\Xi(d, D, U,u)  := \overline{W}^{(q)}(U-d, u-d)   W^{(q)}(D-d) - W^{(q)}(U-d,u-d)  \overline{W}^{(q)}(D-d).
\end{align*}

We first obtain the expression for \eqref{v_L}.
\begin{proposition} 
 Let 
 \begin{align*}
 \epsilon_L := (u-U) + k_L.
 \end{align*}
 \begin{enumerate}
 \item  For all $d \leq x \leq u$,
\begin{align*}
&v_L(x) = \frac {\epsilon_L} {\Xi(d,D, U,u)} \left[   Z^{(q)}(x-d) \frac {W^{(q)}(D-d)} q - W^{(q)}(x-d)   \overline{W}^{(q)}(D-d) \right].
\end{align*}

\item For all $x > u$, 
\begin{align*}
v_L(x) &= (x-U) + k_L + v_L(U) \\ &=  (x-U) + k_L +
\frac  {\epsilon_L} q \frac {W^{(q)}(D-d)  {Z^{(q)}(U-d)}  - q {W^{(q)}(U-d)}  \overline{W}^{(q)}(D-d)} {\Xi(d,D, U,u)}.
\end{align*}
\item  For all $x <d$,
\begin{align*}
v_L(x) = v_L(D) =\frac {\epsilon_L} q \frac {W^{(q)}(D-d)} {\Xi(d,D, U,u)}.
\end{align*}
\end{enumerate}
\end{proposition}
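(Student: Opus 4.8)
The plan is to exploit the renewal structure of the band policy together with the fact that, since $X$ is spectrally negative, it creeps upward across any level. First I would observe that every up-crossing of $u$ by the pre-controlled process $\tilde{A}$ is continuous, so by the construction in \textbf{Step 2} each action of $L$ (other than a possible initial push) is a downward adjustment of exactly $u-U$; hence $\Delta L_s = u-U$ on $\{\Delta L_s > 0\}$ at every crossing time, and \eqref{v_L} reduces to $v_L(x) = \epsilon_L\, N(x)$, where $N(x) := \mathbb{E}_x[\sum_s e^{-qs} 1_{\{\Delta L_s > 0\}}]$ is the expected discounted number of $L$-actions. This quantity is finite because between consecutive resets the controlled process must traverse a positive gap (of size $u-U$ or $D-d$), so interventions do not accumulate in bounded time, and the discount factor $q>0$ makes the resulting series summable.

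The core computation is a first-step (strong Markov) decomposition for $d \le x \le u$. Starting from such an $x$, no control acts until the first exit of $X$ from $(d,u)$ at time $\tau_d^- \wedge \tau_u^+$. On $\{\tau_u^+ < \tau_d^-\}$ the process creeps up to $u$, $L$ fires once, and the controlled process restarts from $U$; on $\{\tau_d^- < \tau_u^+\}$ it is pushed up to $D$ by $R$ with no contribution to $N$. The strong Markov property then gives the recursion $N(x) = p^+(x)\,(1 + N(U)) + p^-(x)\,N(D)$, where $p^+(x) := \mathbb{E}_x[e^{-q\tau_u^+} 1_{\{\tau_u^+ < \tau_d^-\}}]$ and $p^-(x) := \mathbb{E}_x[e^{-q\tau_d^-} 1_{\{\tau_d^- < \tau_u^+\}}]$. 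Shifting the exit identities \eqref{two_sided_exit} from $[0,u-d]$ to $[d,u]$ identifies $p^+(x) = W^{(q)}(x-d)/W^{(q)}(u-d)$ and $p^-(x) = Z^{(q)}(x-d) - Z^{(q)}(u-d)\,W^{(q)}(x-d)/W^{(q)}(u-d)$.

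Next I would close the recursion by evaluating it at $x=U$ and $x=D$, obtaining a $2\times 2$ linear system for the unknowns $N(U)$ and $N(D)$. Solving by Cramer's rule yields $N(D) = p^+(D)/\Delta$ and a companion expression for $N(U)$, where $\Delta$ is the system determinant. The key simplification — and the step I expect to carry most of the algebra — is to show, using $Z^{(q)} = 1 + q\,\overline{W}^{(q)}$, that $W^{(q)}(u-d)\,\Delta = q\,\Xi(d,D,U,u)$: several cross terms cancel and the differences $Z^{(q)}(u-d) - Z^{(q)}(U-d)$ and $1 - Z^{(q)}(D-d)$ convert directly into the integrated scale functions appearing in $\Xi$. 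This immediately gives parts (3) and (2). Substituting $N(U)$ and $N(D)$ back into the recursion and simplifying through $\Xi$ — the combination $1 + N(U) - Z^{(q)}(u-d)\,N(D)$ collapses to $-W^{(q)}(u-d)\,\overline{W}^{(q)}(D-d)/\Xi$ — produces the formula in part (1).

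Finally, the two remaining cases follow from the initialization in \textbf{Step 1}. For $x > u$, the controller immediately pushes down to $U$, an $L$-action of size $x-U$ at time $0$ costing $(x-U)+k_L$, after which the process restarts from $U$; thus $v_L(x) = (x-U)+k_L+v_L(U)$, and the stated closed form follows by inserting part (2). For $x < d$, the initial adjustment is made by $R$, which contributes nothing to $v_L$, so $v_L(x) = v_L(D)$, i.e.\ part (3). The main obstacle throughout is bookkeeping rather than conceptual: justifying the recursion rigorously (finiteness of $N$ and the legitimacy of the strong Markov splitting at the creeping up-crossing) and then carrying out the determinant identity $W^{(q)}(u-d)\,\Delta = q\,\Xi$ cleanly.
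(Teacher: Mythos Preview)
Your proposal is correct and follows essentially the same route as the paper: a strong-Markov first-exit decomposition on $[d,u]$ using the two-sided exit identities \eqref{two_sided_exit}, evaluation at $x=U,D$ to produce a $2\times 2$ linear system, and then algebraic reduction to $\Xi(d,D,U,u)$, with parts (2) and (3) read off from the initialization in \textbf{Step~1}. The only cosmetic differences are that you factor through the discounted count $N(x)=v_L(x)/\epsilon_L$ (making the $\epsilon_L$ scaling explicit from the outset) and solve the system by Cramer's rule, whereas the paper works directly with $v_L$ and eliminates via the combination $v_L(U)-v_L(D)\,W^{(q)}(U-d)/W^{(q)}(D-d)$; neither choice affects the substance.
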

\begin{proof} 
Fix $d \leq x \leq u$. Suppose 
\begin{align}
T_b^+ := \inf \left\{ t \geq 0: \tilde{A}_t >  b \right\} \quad \textrm{and} \quad  T_b^- := \inf \left\{ t \geq 0: \tilde{A}_t < b \right\} , \quad b \in \R. \label{hitting_time_A_tilde}
\end{align}
Because the law of $\{ \tilde{A}_t;  t \leq T_u^+ \wedge T_d^- \}$ and that of $\{ X_t; t \leq  \tau_u^+ \wedge \tau_d^- \}$ are the same (see the above construction of the process $\tilde{A}$),  the strong Markov property  and $\tilde{A}_{T_u^+}   = u$ on $\{T_u^+ < \infty \}$ (due to the fact that $X$ has no positive jumps) gives
\begin{align*}
v_L(x) &= \mathbb{E}_x \left[e^{-q T_u^+} 1_{\{T_u^+ < T_d^- \}}\right] (v_L(U)+\epsilon_L) +  \mathbb{E}_x \left[e^{-q T_d^-} 1_{\{T_u^+ > T_d^- \}}\right] v_L(D) \\&= \mathbb{E}_x \left[e^{-q \tau_u^+} 1_{\{\tau_u^+ < \tau_d^- \}}\right] (v_L(U)+\epsilon_L) +  \mathbb{E}_x \left[e^{-q \tau_d^-} 1_{\{\tau_u^+ > \tau_d^- \}}\right] v_L(D).
\end{align*}
Hence, by \eqref{two_sided_exit}, 
\begin{align} \label{v_split}
\begin{split}
v_L(x) &= \frac {W^{(q)}(x-d)}  {W^{(q)}(u-d)} [v_L(U)+\epsilon_L] +  \left[ Z^{(q)}(x-d) -  Z^{(q)}(u-d) \frac {W^{(q)}(x-d)}  {W^{(q)}(u-d)} \right] v_L(D)  \\
&= \frac {W^{(q)}(x-d)}  {W^{(q)}(u-d)} \left[ v_L(U) +\epsilon_L- Z^{(q)}(u-d)v_L(D)  \right] 
+  Z^{(q)}(x-d) v_L(D).
\end{split}
\end{align}
In particular, by substituting $x=U, D$, we obtain
\begin{align}
v_L(U) 
&= \frac {W^{(q)}(U-d)}  {W^{(q)}(u-d)} \left[ v_L(U) +\epsilon_L- Z^{(q)}(u-d)v_L(D)  \right] 
+  Z^{(q)}(U-d) v_L(D), \nonumber \\
v_L(D) 
&= \frac {W^{(q)}(D-d)}  {W^{(q)}(u-d)} \left[ v_L(U) +\epsilon_L- Z^{(q)}(u-d)v_L(D)  \right] 
+  Z^{(q)}(D-d) v_L(D). \label{v_L_quadratic_eqn}
\end{align}
By computing $v_L(U) - v_L(D) W^{(q)}(U-d) / W^{(q)}(D-d)$, we attain the relation:
%\begin{align*}
%v_L(U) - \frac {W^{(q)}(U-d)}  {W^{(q)}(D-d)} v_L(D) = Z^{(q)}(U-d) v_L(D) - \frac {W^{(q)}(U-d)}  {W^{(q)}(D-d)}  Z^{(q)}(D-d) v_L(D),
%\end{align*}
%or
\begin{align}
v_L(U)  = v_L(D) \left[ Z^{(q)}(U-d) - \frac {W^{(q)}(U-d)}  {W^{(q)}(D-d)}  q \overline{W}^{(q)}(D-d)  \right]. \label{relation_v_L_U}
\end{align}
Substituting this back in \eqref{v_L_quadratic_eqn} and solving for $v_L(D)$, we obtain
%\begin{align*}
%v_L(D) 
%&= \frac {W^{(q)}(D-d)}  {W^{(q)}(u-d)} \left[  v_L(D) \left( Z^{(q)}(U-d) - \frac {W^{(q)}(U-d)}  {W^{(q)}(D-d)}  q \overline{W}^{(q)}(D-d)  \right)+\epsilon_L- Z^{(q)}(u-d)v_L(D)  \right] \\
%&+  Z^{(q)}(D-d) v_L(D).
%\end{align*}
%This implies
%\begin{align*}
%&v_L(D) - v_L(D) \frac {W^{(q)}(D-d)}  {W^{(q)}(u-d)} \left[  Z^{(q)}(U-d) - \frac {W^{(q)}(U-d)}  {W^{(q)}(D-d)}  q \overline{W}^{(q)}(D-d)  - Z^{(q)}(u-d)\right] \\
%&-  Z^{(q)}(D-d) v_L(D)  = \epsilon_L \frac {W^{(q)}(D-d)}  {W^{(q)}(u-d)} 
%\end{align*}
%or
%\begin{align*}
%& -\frac {W^{(q)}(D-d)}  {W^{(q)}(u-d)} \left[  Z^{(q)}(U-d)  - \frac {W^{(q)}(U-d)}  {W^{(q)}(D-d)}  q \overline{W}^{(q)}(D-d)  - Z^{(q)}(u-d)  \right] \\
%&-  q \overline{W}^{(q)}(D-d)   = \epsilon_L \frac {W^{(q)}(D-d)}  {W^{(q)}(u-d)} v_L(D)^{-1},
%\end{align*}
%or  
\begin{align}
v_L(D) 
%&= \epsilon_L \frac {W^{(q)}(D-d)} {W^{(q)}(D-d) (Z^{(q)}(u-d)-  Z^{(q)}(U-d))  - {(W^{(q)}(u-d)-W^{(q)}(U-d))}    q \overline{W}^{(q)}(D-d)} \\
&= \frac {\epsilon_L} q \frac {W^{(q)}(D-d)} {\Xi(d,D, U,u)}. \label{v_L_D} 
\end{align}
In addition, substituting this in \eqref{relation_v_L_U} gives
\begin{align}
v_L(U)  &= 
\frac  {\epsilon_L} q \frac {W^{(q)}(D-d)  {Z^{(q)}(U-d)}  - q {W^{(q)}(U-d)}  \overline{W}^{(q)}(D-d)} {\Xi(d,D, U,u)}. \label{v_L_U} 
\end{align}
%Recall
%\begin{align*}
%v(x) = \frac {W^{(q)}(x-A)}  {W^{(q)}(B-A)} \left[ v(b) +\delta- Z^{(q)}(B-A)v(a)  \right] 
%+  Z^{(q)}(x-A) v(a).
%\end{align*}
These together with \eqref{v_split} complete the proof of (1).
The proofs of (2) and (3) are immediate by the construction of the process $L$ and by \eqref{v_L_D} and \eqref{v_L_U}.
%\red{
%In view of \eqref{v_split}, we have
%\begin{align*}
%&v_L(U) +\epsilon_L- Z^{(q)}(u-d)v_L(D) \\ &= \epsilon_L \frac {W^{(q)}(D-d) (\overline{W}^{(q)}(u-d)-  \overline{W}^{(q)}(U-d))  - {W^{(q)}(u-d)}  \overline{W}^{(q)}(D-d) } {\Xi(d, D,U,u)}.
%\end{align*}}
%We have
%\begin{align*}
% & \frac {\Theta(a,A, b,B) }  \delta \Big( v(b) +\delta - Z^{(q)}(B-A)v(a) \Big) \\
% &=Z^{(q)}(b-A) - \frac {W^{(q)}(b-A)}  {W^{(q)}(a-A)}  q \overline{W}^{(q)}(a-A)  + \Theta(a,A, b,B) - Z^{(q)}(B-A)   \\
%&= Z^{(q)}(b-A) - \frac {W^{(q)}(b-A)}  {W^{(q)}(a-A)}  q \overline{W}^{(q)}(a-A)  \\ &+  \Big( Z^{(q)}(B-A)-  Z^{(q)}(b-A)  - \frac {W^{(q)}(B-A)-W^{(q)}(b-A)}  {W^{(q)}(a-A)}  q \overline{W}^{(q)}(a-A) \Big) - Z^{(q)}(B-A)   \\
%&=  - \frac {W^{(q)}(B-A)}  {W^{(q)}(a-A)}  q \overline{W}^{(q)}(a-A). 
%\end{align*}
%This proves the claim.
\end{proof}

We now move on to obtaining the expression for \eqref{v_R}.  Toward this end, we assume that the first moment of $X_t$ is finite.
\begin{assump} \label{assump_mu} Suppose  $\mu :=\mathbb{E} [X_1]  = \psi'(0+) \in (-\infty,\infty)$.
\end{assump}
We define the following short-hand notations:
\begin{align*}
\epsilon_R &:= (D-d) + k_R, \\
%R^{(q)}(y) &:= \overline{Z}^{(q)}(y) + \frac {\mu} q, \quad y \in \R, \\
Y^{(q)}(y) &\equiv Y^{(q)}(y; \epsilon_R):=  \overline{Z}^{(q)}(y) + \frac {\mu} q - \Big(\frac {\mu} q +\epsilon_R \Big)  Z^{(q)}(y), \quad y \in \R.
\end{align*}
\begin{proposition}  Suppose Assumption  \ref{assump_mu} holds.
\begin{enumerate}
\item For $d \leq x \leq u$,
\begin{align*}
&{v_R(x)}   \\
&=    \frac {Z^{(q)}(x-d)} q   \frac {   W^{(q)}(D-d)  \big[Y^{(q)}(u-d) - Y^{(q)}(U-d) \big] -   {Y^{(q)}(D-d)  W^{(q)}(U-d,u-d)}   } {\Xi(d, D, U,u)}\\ & - Y^{(q)}(x-d) \\ &+    W^{(q)}(x-d) \frac {  Y^{(q)}(D-d)   \overline{W}^{(q)}(U-d,u-d) - \overline{W}^{(q)}(D-d)  \big[     Y^{(q)}(u-d) - Y^{(q)}(U-d) \big] } {\Xi(d, D, U,u)}. 
\end{align*}
\item For all $x > u$,
\begin{align*}
v_R(x) &= v_R(U) \\ &=  
 \Big[ Z^{(q)}(U-d) -  \frac {W^{(q)}(U-d)} {W^{(q)}(D-d)}   q \overline{W}^{(q)}(D-d) \Big]  \\ &\times \frac {[Y^{(q)}(u-d) - Y^{(q)}(U-d)]  W^{(q)}(D-d)  -{W^{(q)}(U-d,u-d)}    Y^{(q)}(D-d)} {q\Xi(d, D, U,u)}
\\
 &+ \frac {W^{(q)}(U-d)} {W^{(q)}(D-d)}  Y^{(q)}(D-d) - Y^{(q)}(U-d).
\end{align*}
\item  For all $x <d$, 
\begin{align*}
v_R(x) &= (D-x) + k_R + v_R(D) \\ &=  (D-x) + k_R \\ &+  \frac {[Y^{(q)}(u-d) - Y^{(q)}(U-d)]  W^{(q)}(D-d)  -{W^{(q)}(U-d,u-d)}    Y^{(q)}(D-d)} {q\Xi(d, D, U,u)}.
\end{align*}

\end{enumerate}
%We have
%\begin{align*}
%u(a) &=  c_R \Big[ -  \frac {W^{(q)}(B-A)-W^{(q)}(b-A)} {W^{(q)}(a-A)}   Y^{(q)}(a-A) +   (Y^{(q)}(B-A) - Y^{(q)}(b-A)) \Big] / \Theta(a,A, b,B),
%\end{align*}
%and
%\begin{align*}
%u(b) 
%&=  \Big( Z^{(q)}(b-A) -  \frac {W^{(q)}(b-A)} {W^{(q)}(a-A)}   q \overline{W}^{(q)}(a-A) \Big)  u(a) + c_R  \frac {W^{(q)}(b-A)} {W^{(q)}(a-A)}  Y^{(q)}(a-A) - c_R Y^{(q)}(b-A).
%\end{align*}

\end{proposition}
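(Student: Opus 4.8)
The plan is to reproduce the first-passage decomposition behind \eqref{v_split}, the only new feature being that the cost of an upward intervention depends on the size of the undershoot of $X$ below $d$. Fix $d \le x \le u$ and recall the hitting times $T_u^+, T_d^-$ of $\tilde A$ in \eqref{hitting_time_A_tilde}. Because $\{\tilde A_t; t \le T_u^+\wedge T_d^-\}$ has the same law as $\{X_t; t\le \tau_u^+\wedge\tau_d^-\}$ and $\tilde A_{T_u^+}=u$ on $\{T_u^+<T_d^-\}$ (no positive jumps), the strong Markov property applied at the first intervention epoch gives
\[
v_R(x) = \mathbb{E}_x\!\left[e^{-q\tau_u^+}1_{\{\tau_u^+<\tau_d^-\}}\right]v_R(U) + \mathbb{E}_x\!\left[e^{-q\tau_d^-}1_{\{\tau_d^-<\tau_u^+\}}\right]v_R(D) + \rho(x),
\]
where $\rho(x):=\mathbb{E}_x[e^{-q\tau_d^-}((D-X_{\tau_d^-})+k_R)1_{\{\tau_d^-<\tau_u^+\}}]$ is the expected discounted cost of the first upward push. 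Writing $(D-X_{\tau_d^-})+k_R = \epsilon_R + (d-X_{\tau_d^-})$ splits $\rho(x) = \epsilon_R\,\mathbb{E}_x[e^{-q\tau_d^-}1_{\{\tau_d^-<\tau_u^+\}}] + g(x)$, where the first factor is known from \eqref{two_sided_exit} and $g(x):=\mathbb{E}_x[e^{-q\tau_d^-}(d-X_{\tau_d^-})1_{\{\tau_d^-<\tau_u^+\}}]$ is the discounted undershoot; this last quantity is the genuinely new ingredient.

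To evaluate $g(x)$ I would use the process $e^{-qt}X_t + \int_0^t e^{-qs}(qX_s-\mu)\,\mathrm{d}s$, which is a martingale precisely because $\mathbb{E}[X_t]=x+\mu t$ under Assumption \ref{assump_mu}. Optional stopping at $\tau:=\tau_d^-\wedge\tau_u^+$ yields
\[
\mathbb{E}_x[e^{-q\tau}X_\tau] = x - q\,\mathbb{E}_x\Big[\int_0^\tau e^{-qs}X_s\,\mathrm{d}s\Big] + \mu\,\mathbb{E}_x\Big[\int_0^\tau e^{-qs}\,\mathrm{d}s\Big],
\]
and the two occupation integrals are obtained from the resolvent identity \eqref{resolvent_formula} with $f(w)=w$ and $f(w)=1$ (both bounded on $[d,u]$), the latter giving $\varphi_d(\cdot\,;1)=\overline{W}^{(q)}(\cdot-d)$. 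Since $X_{\tau_u^+}=u$, subtracting $u\,\mathbb{E}_x[e^{-q\tau_u^+}1_{\{\tau_u^+<\tau_d^-\}}]$ isolates $\mathbb{E}_x[e^{-q\tau_d^-}X_{\tau_d^-}1_{\{\tau_d^-<\tau_u^+\}}]$, and hence $g(x)$. After substituting $Z^{(q)}=1+q\overline{W}^{(q)}$ and the definitions of $\overline{Z}^{(q)}$ and $Y^{(q)}$, I expect the whole of $\rho$ to collapse to
\[
\rho(x) = -Y^{(q)}(x-d) + \frac{W^{(q)}(x-d)}{W^{(q)}(u-d)}\,Y^{(q)}(u-d);
\]
the identities $\rho(u)=0$ and (since $Y^{(q)}(0)=-\epsilon_R$) $\rho(d)=\epsilon_R$ serve as consistency checks.

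With this $\rho$ the expression for $v_R$ becomes structurally identical to \eqref{v_split}, the affine source term now being $-Y^{(q)}(x-d)+\frac{W^{(q)}(x-d)}{W^{(q)}(u-d)}Y^{(q)}(u-d)$ in place of the $\epsilon_L$-driven one. Evaluating at $x=U$ and $x=D$ produces two linear equations in $v_R(U),v_R(D)$, exactly as in \eqref{v_L_quadratic_eqn}. I would then eliminate by forming $v_R(U)-v_R(D)\,W^{(q)}(U-d)/W^{(q)}(D-d)$, as in the passage to \eqref{relation_v_L_U}, solve for $v_R(D)$ — the denominator assembling into $\Xi(d,D,U,u)$ and producing the formula in part (3) — and back-substitute to obtain $v_R(U)$, which yields part (2). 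The remaining statements for $x\notin[d,u]$ are read off from the construction: for $x<d$ an immediate push to $D$ costs $(D-x)+k_R$ and restarts the problem at $D$, so $v_R(x)=(D-x)+k_R+v_R(D)$; for $x>u$ the immediate intervention is downward and carries no $R$-cost, so $v_R(x)=v_R(U)$.

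The main obstacle is the computation of the discounted undershoot $g(x)$. Two points need care: justifying the optional-stopping step by localization together with the a priori finiteness of $\mathbb{E}_x[e^{-q\tau_d^-}|X_{\tau_d^-}|1_{\{\tau_d^-<\tau_u^+\}}]$, which is exactly what Assumption \ref{assump_mu} (equivalently $\int_{(-\infty,-1)}|z|\,\nu(\mathrm{d}z)<\infty$) guarantees; and the bookkeeping that reduces the $f(w)=w$ resolvent to the combination of $\overline{Z}^{(q)}$ and $\overline{W}^{(q)}$ assembling into $Y^{(q)}$. Once $\rho$ is in hand, the remaining algebra is routine and mirrors the proof of the preceding proposition for $v_L$.
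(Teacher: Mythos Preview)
Your proposal is correct and follows the paper's proof almost verbatim: the same strong Markov decomposition at $T_u^+\wedge T_d^-$, the same elimination $v_R(U)-v_R(D)\,W^{(q)}(U-d)/W^{(q)}(D-d)$, and the same back-substitution to recover the general formula and the boundary cases.

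The one methodological difference is in how you obtain
\[
\mathbb{E}_x\!\left[e^{-q\tau_d^-}1_{\{\tau_d^-<\tau_u^+\}}\big(d-X_{\tau_d^-}+\epsilon_R\big)\right]
= -Y^{(q)}(x-d)+\frac{W^{(q)}(x-d)}{W^{(q)}(u-d)}\,Y^{(q)}(u-d).
\]
The paper simply cites this identity from Lemma~3.1 of \cite{Bayraktar_2013}, whereas you derive it from scratch via optional stopping for the martingale $e^{-qt}X_t+\int_0^t e^{-qs}(qX_s-\mu)\,\mathrm{d}s$ together with the resolvent \eqref{resolvent_formula} applied to $f\equiv 1$ and $f(w)=w$. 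Your route is more self-contained and makes explicit where Assumption~\ref{assump_mu} enters (integrability of the undershoot and justification of optional stopping), at the cost of the extra bookkeeping needed to reassemble the resolvent integrals into $\overline{Z}^{(q)}$ and hence $Y^{(q)}$; the paper's route is shorter but relies on an external reference. Either way the downstream linear algebra is identical.
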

\begin{proof}
%For the spectrally positive case
%\begin{align*}
%\mathbb{E}_x \left[ e^{-q \tau_{c}^+} 1_{\{ \tau_{c}^+ < \tau_0^-\}} (X_{\tau_{c}^+}-c) \right] = - \Big[ R^{(q)}(c-x)  + \frac {\mu} q Z^{(q)}(c-x) \Big] +  \left[ \frac {\mu} q   Z^{(q)}(c) + R^{(q)}(c) \right] \frac {W^{(q)}(c-x)} {W^{(q)}(c)},
%\end{align*}
%where $R^{(q)}(y) := \overline{Z}^{(q)}(y) - \frac {\tilde{\mu}} q, \quad y \in \R$ and $\tilde{\mu} = -\mu$ is the first moment of the spectrally positive L\'{e}vy process.  By symmetry,
Fix $d \leq x \leq u$.
Because the law of $\{ \tilde{A}_t;  t \leq T_u^+ \wedge T_d^- \}$ and that of $\{ X_t; t \leq  \tau_u^+ \wedge \tau_d^- \}$ are the same (see \eqref{hitting_time_A_tilde}), the strong Markov property gives
\begin{align*}
v_R(x) &= \mathbb{E}_x \left[e^{-q T_u^+} 1_{\{T_u^+ < T_d^- \}}\right] v_R(U) +  \mathbb{E}_x \left[e^{-q T_d^-} 1_{\{T_u^+ > T_d^- \}}\right] v_R(D)  \\
&+  \mathbb{E}_x \left[ e^{-q T_{d}^-} 1_{\{ T_{d}^- < T_u^+\}} (d-\tilde{A}_{T_{d}^-} + \epsilon_R) \right]
\\&=  \mathbb{E}_x \left[e^{-q \tau_u^+} 1_{\{\tau_u^+ < \tau_d^- \}}\right] v_R(U) +  \mathbb{E}_x \left[e^{-q \tau_d^-} 1_{\{\tau_u^+ > \tau_d^- \}}\right] v_R(D)  \\
&+  \mathbb{E}_x \left[ e^{-q \tau_{d}^-} 1_{\{ \tau_{d}^- < \tau_u^+\}} (d-X_{\tau_{d}^-} + \epsilon_R) \right].
\end{align*}
Here, Lemma 3.1 of  \cite{Bayraktar_2013}
%\begin{align*}
%\mathbb{E}_x \left[ e^{-q \tau_{d}^-} 1_{\{ \tau_{d}^- < \tau_u^+\}} (d-X_{\tau_{d}^-}) \right]
%&= - \Big[ R^{(q)}(x-d)  - \frac {\mu} q Z^{(q)}(x-d) \Big] \\ &+  \left[  R^{(q)}(u-d) -\frac {\mu} q   Z^{(q)}(u-d) \right] \frac {W^{(q)}(x-d)} {W^{(q)}(u-d)}.
%\end{align*}
%This together with 
and \eqref{two_sided_exit} give
\begin{align*}
\mathbb{E}_x \left[ e^{-q \tau_{d}^-} 1_{\{ \tau_{d}^- < \tau_u^+\}} (d-X_{\tau_{d}^-} + \epsilon_R) \right]
%&= - \Big[ R^{(q)}(x-d)  - \Big(\frac {\mu} q +\epsilon \Big)  Z^{(q)}(x-d) \Big] \\ &+  \left[  R^{(q)}(u-d) - \Big(\frac {\mu} q  +\epsilon \Big) Z^{(q)}(u-d)\right] \frac {W^{(q)}(x-d)} {W^{(q)}(u-d)} \\
&= -Y^{(q)}(x-d)  +  Y^{(q)}(u-d)  \frac {W^{(q)}(x-d)} {W^{(q)}(u-d)}.
\end{align*}

Substituting this and  \eqref{two_sided_exit},
\begin{align} \label{u_decomposition}
\begin{split}
v_R(x) &= \frac {W^{(q)}(x-d)}  {W^{(q)}(u-d)}  v_R(U) +  \left[ Z^{(q)}(x-d) -  Z^{(q)}(u-d) \frac {W^{(q)}(x-d)}  {W^{(q)}(u-d)} \right]  v_R(D) \\
&-  Y^{(q)}(x-d)  +   Y^{(q)}(u-d)  \frac {W^{(q)}(x-d)} {W^{(q)}(u-d)} \\
&=  \left[ Z^{(q)}(x-d) -  Z^{(q)}(u-d) \frac {W^{(q)}(x-d)}  {W^{(q)}(u-d)} \right]  v_R(D) - Y^{(q)}(x-d)  \\ &+  \left[  Y^{(q)}(u-d) + v_R(U)  \right] \frac {W^{(q)}(x-d)} {W^{(q)}(u-d)}.
\end{split}
\end{align}
In particular,  by setting $x = D, U$, we obtain
\begin{align}
\begin{split}
v_R(D) 
&=  \left[ Z^{(q)}(D-d) -  Z^{(q)}(u-d) \frac {W^{(q)}(D-d)}  {W^{(q)}(u-d)} \right]  v_R(D) - Y^{(q)}(D-d)  \\ &+  \left[  Y^{(q)}(u-d) + v_R(U)  \right] \frac {W^{(q)}(D-d)} {W^{(q)}(u-d)}, 
\end{split} \label{v_R_D_intermidiate}
\\
v_R(U) 
&=  \left[ Z^{(q)}(U-d) -  Z^{(q)}(u-d) \frac {W^{(q)}(U-d)}  {W^{(q)}(u-d)} \right]  v_R(D) - Y^{(q)}(U-d)  \nonumber \\ &+  \left[  Y^{(q)}(u-d) + v_R(U)  \right] \frac {W^{(q)}(U-d)} {W^{(q)}(u-d)}. \nonumber
\end{align}
In order to solve this system of equations, we compute
\begin{align*}
v_R(U) - v_R(D)  \frac {W^{(q)}(U-d)} {W^{(q)}(D-d)} &= \left[ Z^{(q)}(U-d) -  Z^{(q)}(u-d) \frac {W^{(q)}(U-d)}  {W^{(q)}(u-d)} \right]  v_R(D) \\
&-  \frac {W^{(q)}(U-d)} {W^{(q)}(D-d)}  \left[ Z^{(q)}(D-d) -  Z^{(q)}(u-d) \frac {W^{(q)}(D-d)}  {W^{(q)}(u-d)} \right]  v_R(D) \\
&+   \frac {W^{(q)}(U-d)} {W^{(q)}(D-d)} Y^{(q)}(D-d) - Y^{(q)}(U-d)   \\
&=   \Big[ Z^{(q)}(U-d)  -  \frac {W^{(q)}(U-d)} {W^{(q)}(D-d)}   Z^{(q)}(D-d)  \Big] v_R(D) \\
&+ \frac {W^{(q)}(U-d)} {W^{(q)}(D-d)}  Y^{(q)}(D-d) -Y^{(q)}(U-d),
\end{align*}
and therefore
\begin{align}
\begin{split}
v_R(U) 
&=  \Big[ Z^{(q)}(U-d) -  \frac {W^{(q)}(U-d)} {W^{(q)}(D-d)}   q \overline{W}^{(q)}(D-d) \Big]  v_R(D) \\ &+  \frac {W^{(q)}(U-d)} {W^{(q)}(D-d)}  Y^{(q)}(D-d) - Y^{(q)}(U-d). \end{split} \label{v_R_U_temp}
\end{align}
Substituting this in \eqref{v_R_D_intermidiate} and solving for $v_R(D)$ gives
%\begin{align*}
%v_R(D) 
%&=  \left( Z^{(q)}(D-d) -  Z^{(q)}(u-d) \frac {W^{(q)}(D-d)}  {W^{(q)}(u-d)} \right)  v_R(D) \\
%&-c_R Y^{(q)}(D-d)  +  c_RY^{(q)}(u-d) \frac {W^{(q)}(D-d)} {W^{(q)}(u-d)} \\
%&+ \Big(  \frac {W^{(q)}(D-d)} {W^{(q)}(u-d)}  Z^{(q)}(U-d) -   \frac {W^{(q)}(U-d)} {W^{(q)}(u-d)}   q \overline{W}^{(q)}(D-d) \Big)  v_R(D) \\ &+  c_R \frac {W^{(q)}(U-d)} {W^{(q)}(u-d)}   Y^{(q)}(D-d) - c_R \frac {W^{(q)}(D-d)} {W^{(q)}(u-d)}  Y^{(q)}(U-d).
%\end{align*}
%Hence
%\begin{align*}
%0
%&=  \left(  \frac {W^{(q)}(u-d)-W^{(q)}(U-d)} {W^{(q)}(u-d)}   q \overline{W}^{(q)}(D-d)  -  (Z^{(q)}(u-d)-Z^{(q)}(U-d)) \frac {W^{(q)}(D-d)}  {W^{(q)}(u-d)} \right)  v_R(D) \\ &-  c_R \frac {W^{(q)}(u-d)-W^{(q)}(U-d)} {W^{(q)}(u-d)}   Y^{(q)}(D-d) +  c_R \frac {W^{(q)}(D-d)} {W^{(q)}(u-d)}  (Y^{(q)}(u-d) - Y^{(q)}(U-d)),
%\end{align*}
%or
%\begin{align*}
%0
%&=  \left(   {(W^{(q)}(u-d)-W^{(q)}(U-d))}    q \overline{W}^{(q)}(D-d) -  (Z^{(q)}(u-d)-Z^{(q)}(U-d))  {W^{(q)}(D-d)}  \right)  v_R(D) \\ &-  c_R  {(W^{(q)}(u-d)-W^{(q)}(U-d))}    Y^{(q)}(D-d) +  c_R  {W^{(q)}(D-d)}  (Y^{(q)}(u-d) - Y^{(q)}(U-d)),
%\end{align*}
%or
\begin{align}
v_R(D)  =  \frac {[Y^{(q)}(u-d) - Y^{(q)}(U-d)]  W^{(q)}(D-d)  -{W^{(q)}(U-d,u-d)}    Y^{(q)}(D-d)} {q\Xi(d, D, U,u)}, \label{v_R_D}
\end{align}
and hence
\begin{align}\label{v_R_U}
\begin{split}
v_R(U) 
&=  \Big[ Z^{(q)}(U-d) -  \frac {W^{(q)}(U-d)} {W^{(q)}(D-d)}   q \overline{W}^{(q)}(D-d) \Big]  \\ &\times \frac {[Y^{(q)}(u-d) - Y^{(q)}(U-d)]  W^{(q)}(D-d)  -{W^{(q)}(U-d,u-d)}    Y^{(q)}(D-d)} {q\Xi(d, D, U,u)}
\\
 &+ \frac {W^{(q)}(U-d)} {W^{(q)}(D-d)}  Y^{(q)}(D-d) - Y^{(q)}(U-d).
 \end{split}
\end{align}
%Hence
%\begin{align*}
%c_R^{-1} \Xi v_R(U) 
%&=  \Big( Z^{(q)}(U-d) -  \frac {W^{(q)}(U-d)} {W^{(q)}(D-d)}   q \overline{W}^{(q)}(D-d) \Big)  \\ &\times \frac {1} q  {[Y^{(q)}(u-d) - Y^{(q)}(U-d)]  [W^{(q)}(D-d)  -{[W^{(q)}(u-d)-W^{(q)}(U-d)]}    Y^{(q)}(D-d)} ] \\
% &+  \Big[  {W^{(q)}(U-d)}   Y^{(q)}(D-d) - \frac {Y^{(q)}(U-d)}{{W^{(q)}(D-d)}} \Big] \\
%&\times [ (\overline{W}^{(q)}(u-d)-  \overline{W}^{(q)}(U-d))  - {(W^{(q)}(u-d)-W^{(q)}(U-d))}  \frac{ \overline{W}^{(q)}(D-d)} {W^{(q)}(D-d)}].
%\end{align*}

By \eqref{u_decomposition} and \eqref{v_R_U_temp},
\begin{align}
v_R(x) 
&=   Z^{(q)}(x-d)   v_R(D) -Y^{(q)}(x-d)+   B(d,D,U,u) \frac {W^{(q)}(x-d)} {W^{(q)}(u-d)}, \label{v_R_semifinal}
\end{align}
where \begin{align*}
 B(d,D,U,u)
&:= - q\left[ \overline{W}^{(q)}(U-d,u-d) +  \frac {W^{(q)}(U-d)} {W^{(q)}(D-d)}   \overline{W}^{(q)}(D-d) \right] v_R(D)  \\&+   \frac {W^{(q)}(U-d)} {W^{(q)}(D-d)}  Y^{(q)}(D-d) +  Y^{(q)}(u-d) - Y^{(q)}(U-d). 
\end{align*}
Here in particular
\begin{align*}
& \left[ \overline{W}^{(q)}(U-d,u-d) +  \frac {W^{(q)}(U-d)} {W^{(q)}(D-d)}   \overline{W}^{(q)}(D-d) \right] v_R(D)  \\
 &= \Big[ \frac {W^{(q)}(u-d)}  {W^{(q)}(D-d)}  \overline{W}^{(q)}(D-d)  + \frac {\Xi(d,D, U,u)} {W^{(q)}(D-d)} \Big] v_R(D)   \\
&=   \frac {W^{(q)}(u-d)}  {W^{(q)}(D-d)}  \overline{W}^{(q)}(D-d)  v_R(D) \\ &+  \frac 1 q \Big[ Y^{(q)}(u-d) - Y^{(q)}(U-d)   - \frac {W^{(q)}(U-d,u-d)} {W^{(q)}(D-d)}   Y^{(q)}(D-d)  \Big].
\end{align*}
Hence,
\begin{align*}
& B(d,D,U,u)  
% &= q  \Big[ -\frac {W^{(q)}(u-d)}  {W^{(q)}(D-d)}  \overline{W}^{(q)}(D-d)  - \frac {\Xi(d,D, U,u)} {W^{(q)}(D-d)} \Big] v_R(D)  \\&+   \frac {W^{(q)}(U-d)} {W^{(q)}(D-d)}  Y^{(q)}(D-d) +  (Y^{(q)}(u-d) - Y^{(q)}(U-d)) \\
%&=   -\frac {W^{(q)}(u-d)}  {W^{(q)}(D-d)}  q \overline{W}^{(q)}(D-d)  v_R(D) \\ &+  \Big[  \frac {W^{(q)}(u-d)-W^{(q)}(U-d)} {W^{(q)}(D-d)}   Y^{(q)}(D-d) -   (Y^{(q)}(u-d) - Y^{(q)}(U-d)) \Big] \\&+   \frac {W^{(q)}(U-d)} {W^{(q)}(D-d)}  Y^{(q)}(D-d) +  (Y^{(q)}(u-d) - Y^{(q)}(U-d)) \\
=    \frac {W^{(q)}(u-d)}  {W^{(q)}(D-d)}  \left[Y^{(q)}(D-d) -q \overline{W}^{(q)}(D-d)  v_R(D)    \right] \\
%&=  \frac {W^{(q)}(u-d)}  {W^{(q)}(D-d)} Y^{(q)}(D-d) \\ & \qquad \times \frac {W^{(q)}(D-d) \overline{W}^{(q)}(U-d,u-d)  - {W^{(q)}(U-d,u-d)}  \overline{W}^{(q)}(D-d)} {\Xi(d, D, U,u)} \\
%&+  \frac {W^{(q)}(u-d)}  {W^{(q)}(D-d)} \overline{W}^{(q)}(D-d) \\ & \qquad \times \frac { {W^{(q)}(U-d,u-d)}    Y^{(q)}(D-d) -  W^{(q)}(D-d)  [Y^{(q)}(u-d) - Y^{(q)}(U-d)]} {\Xi(d, D, U,u) }  \\
&=    W^{(q)}(u-d)\frac {   Y^{(q)}(D-d)  \overline{W}^{(q)}(U-d,u-d) -  \overline{W}^{(q)}(D-d)  [Y^{(q)}(u-d) - Y^{(q)}(U-d)]} {\Xi(d, D, U,u)}.
\end{align*}
%where in particular
%\begin{align*}
%&\frac {v_R(U)} {1}  \\
%&=    \frac {Z^{(q)}(U-d)} q   \frac {   W^{(q)}(D-d)  \big[Y^{(q)}(u-d) - Y^{(q)}(U-d) \big] -   {Y^{(q)}(D-d) \big[ W^{(q)}(u-d)-W^{(q)}(U-d) \big]}   } {\Xi(d, D, U,u)}\\ & - Y^{(q)}(U-d) \frac {W^{(q)}(D-d) (\overline{W}^{(q)}(u-d)-  \overline{W}^{(q)}(U-d))  - {(W^{(q)}(u-d)-W^{(q)}(U-d))}  \overline{W}^{(q)}(D-d).
%} {\Xi}\\ &+    W^{(q)}(U-d) \frac {  Y^{(q)}(D-d)   \big[ \overline{W}^{(q)}(u-d)-  \overline{W}^{(q)}(U-d)   \big] - \overline{W}^{(q)}(D-d)  \big[     Y^{(q)}(u-d) - Y^{(q)}(U-d) \big] } {\Xi(d, D, U,u)}. 
%\end{align*}
Substituting this and \eqref{v_R_D} in \eqref{v_R_semifinal}, the proof of (1) is complete.  The proofs of (2) and (3) are immediate by the construction of the process $R$ and by \eqref{v_R_D} and \eqref{v_R_U}.

\end{proof}

%\begin{align*}
%\mathbb{E}_{-x} \left[ e^{-q \tau_{-c}^-} 1_{\{ \tau_{-c}^- < \tau_0^+\}} X_{\tau_{-c}^-}\right]
%&= R^{(q)}(c-x)  - \left( c - \frac {\mu} q \right) Z^{(q)}(c-x) \\ &+  \left[ \left(c -\frac {\mu} q  \right) Z^{(q)}(c) - R^{(q)}(c) \right] \frac {W^{(q)}(c-x)} {W^{(q)}(c)},
%\end{align*}
%and hence
%\begin{align*}
%\mathbb{E}_{-x+B} \left[ e^{-q \tau_{-c+B}^-} 1_{\{ \tau_{-c+B}^- < \tau_B^+\}} X_{\tau_{-c+B}^-}\right]
%&= R^{(q)}(c-x)  - \left( c-B - \frac {\mu} q \right) Z^{(q)}(c-x) \\ &+  \left[ \left(c-B -\frac {\mu} q  \right) Z^{(q)}(c-B) - R^{(q)}(c-B) \right] \frac {W^{(q)}(c-x)} {W^{(q)}(c-B)} \\
%&+ B \mathbb{E}
%\end{align*}

\section{Holding costs} \label{section_holding_costs}

Fix $(d,D,U,u)$ such that $d < u$ and $D,U \in (d,u)$ and define
\begin{align*}
w(x) = w(x;f) := \mathbb{E}_x \left[ \int_0^\infty e^{-qt} f(A_t) {\rm d} t \right],
\end{align*}
for any measurable function $f$ bounded on $[d,u]$.  We define
\begin{align*}
\Theta(d,D,U,u;f) := {W^{(q)}(D-d)}   [\varphi_d (u;f) - \varphi_d (U;f)]
- {W^{(q)}(U-d,u-d)}  \varphi_d (D;f). 
\end{align*}

\begin{proposition}
\begin{enumerate}
\item For any $d \leq x \leq u$,
\begin{align*}
w(x) &= \frac {W^{(q)}(x-d)} {W^{(q)}(D-d)}  \varphi_d (D;f)  - \varphi_d (x;f) \\ &+ \left[ \frac {Z^{(q)}(x-d)} q - {W^{(q)}(x-d)}  \frac { \overline{W}^{(q)}(D-d)} {W^{(q)}(D-d)}   \right] \frac {\Theta(d,D,U,u;f)} 
{ \Xi(d,D, U,u)}.
\end{align*}
\item For $x < d$, 
\begin{align*}
w(x) = w(D) = \frac {\Theta(d,D,U,u;f)} 
{q \Xi(d,D,U,u)}.
\end{align*}
\item For $x > u$, 
\begin{align*}
w(x) &= w(U) \\
&= \frac {\Theta(d,D,U,u;f)} 
{q \Xi(d,D,U,u)}
 \left[ Z^{(q)}(U-d)  -  q \frac {W^{(q)}(U-d)}  {W^{(q)}(D-d)} \overline{W}^{(q)}(D-d) \right] \\ &- \varphi_d (U;f)  +  \frac {W^{(q)}(U-d)}  {W^{(q)}(D-d)} \varphi_d (D;f).
\end{align*}
\end{enumerate}
%\begin{align*}
%w(x) 
%&= \frac {W^{(q)}(x-A)}  {W^{(q)}(B-A)} \left[ w(b) - Z^{(q)}(B-A)w(a) + \varphi_A (B;f)  \right] 
%+  Z^{(q)}(x-A) w(a) - \varphi_A (x;f)
%\end{align*}
%where 
%\begin{align*}
%w(a) &=  \frac { {W^{(q)}(a-A)}   (\varphi_A (b;f) - \varphi_A (B;f)  ) 
%+ ({W^{(q)}(B-A)}-     {W^{(q)}(b-A)} ) \varphi_A (a;f) } 
%{  -  {W^{(q)}(a-A)}    (Z^{(q)}(B-A) - Z^{(q)}(b-A)) +  ({W^{(q)}(B-A)}-{W^{(q)}(b-A)}  ) q \overline{W}^{(q)}(a-A)}
%\end{align*}
%and
%\begin{align*}
%w(b) 
%&= w(a)  \left[ \frac {W^{(q)}(b-A)}  {W^{(q)}(a-A)} +Z^{(q)}(b-A)-  \frac {W^{(q)}(b-A)}  {W^{(q)}(a-A)} Z^{(q)}(a-A) \right] - \varphi_A (b;f)  +  \frac {W^{(q)}(b-A)}  {W^{(q)}(a-A)} \varphi_A (a;f).
%\end{align*}

\end{proposition}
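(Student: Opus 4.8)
The plan is to follow verbatim the strategy of the preceding proof for $v_R$, with the role of the combined overshoot-and-cost term there played here by the resolvent term coming from \eqref{resolvent_formula}. First I would fix $d \le x \le u$ and decompose the infinite-horizon integral at the first exit time $T_u^+ \wedge T_d^-$ of the pre-controlled process $\tilde A$ from \eqref{hitting_time_A_tilde}. Since the law of $\{\tilde A_t; t \le T_u^+ \wedge T_d^-\}$ coincides with that of $\{X_t; t \le \tau_u^+ \wedge \tau_d^-\}$ and the reset to $D$ or $U$ is instantaneous (hence contributes nothing to the time-integral), the strong Markov property gives
\[
w(x) = \mathbb{E}_x\Big[\int_0^{\tau_u^+\wedge\tau_d^-}e^{-qt}f(X_t)\,\diff t\Big] + \mathbb{E}_x\big[e^{-q\tau_u^+}1_{\{\tau_u^+<\tau_d^-\}}\big]\,w(U) + \mathbb{E}_x\big[e^{-q\tau_d^-}1_{\{\tau_d^-<\tau_u^+\}}\big]\,w(D).
\]
Crucially, unlike the $v_R$ case, no overshoot distribution is needed, because the continuation value $w(D)$ does not depend on the exact landing point below $d$; and because $X$ creeps upward, $\tilde A_{T_u^+}=u$. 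Evaluating the first term by \eqref{resolvent_formula} and the two weights by \eqref{two_sided_exit} yields a master identity of exactly the same shape as \eqref{u_decomposition}, with $-Y^{(q)}(x-d)+Y^{(q)}(u-d)\,W^{(q)}(x-d)/W^{(q)}(u-d)$ replaced by $-\varphi_d(x;f)+\varphi_d(u;f)\,W^{(q)}(x-d)/W^{(q)}(u-d)$ and with $v_R(U)$ replaced by $w(U)$.

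Next I would evaluate this master identity at $x=U$ and $x=D$ to obtain a $2\times2$ linear system in $w(U)$ and $w(D)$. Forming the combination $w(U)-\frac{W^{(q)}(U-d)}{W^{(q)}(D-d)}w(D)$ cancels both the $Z^{(q)}(u-d)$ terms and the $w(U)+\varphi_d(u;f)$ terms exactly as in the derivation of \eqref{v_R_U_temp}; using $Z^{(q)}=1+q\overline{W}^{(q)}$ to rewrite $1-Z^{(q)}(D-d)=-q\overline{W}^{(q)}(D-d)$, I obtain the relation
\[
w(U) = \Big[Z^{(q)}(U-d)-\tfrac{W^{(q)}(U-d)}{W^{(q)}(D-d)}q\overline{W}^{(q)}(D-d)\Big]w(D)+\tfrac{W^{(q)}(U-d)}{W^{(q)}(D-d)}\varphi_d(D;f)-\varphi_d(U;f),
\]
the analogue of \eqref{v_R_U_temp}. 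Substituting this back into the $x=D$ equation and unwinding the definition of $\Xi$ gives $w(D)=\Theta/(q\Xi)$, proving (2), and then the displayed expression for $w(U)$, proving (3). Plugging $w(D)$ and $w(U)$ into the master identity and regrouping the coefficient of $W^{(q)}(x-d)$ via the identity $\Xi+\overline{W}^{(q)}(D-d)\,W^{(q)}(U-d,u-d)=\overline{W}^{(q)}(U-d,u-d)\,W^{(q)}(D-d)$ yields the formula in (1). Finally, parts (2) and (3) for $x<d$ and $x>u$ follow at once from the construction of $A$, which pushes the process to $D$ or $U$ at time $0$, so that $w(x)=w(D)$ and $w(x)=w(U)$ respectively.

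I expect the main obstacle to be purely the bookkeeping in solving the linear system and matching the answer to the compact quantities $\Theta$ and $\Xi$; this is error-prone but mechanical. The conceptual content is minimal once the structural parallel with the $v_R$ computation is recognized: every intermediate identity there transfers essentially verbatim under the substitution $Y^{(q)}(\,\cdot\,-d)\mapsto\varphi_d(\,\cdot\,;f)$ together with the removal of the controlling-cost contribution. The only genuinely new point to verify carefully is the decomposition itself, namely that the instantaneous resets incur no holding cost and that the resolvent \eqref{resolvent_formula} (rather than an overshoot identity) supplies the inhomogeneous term.
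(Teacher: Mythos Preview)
Your proposal is correct and follows essentially the same route as the paper's proof: the same strong-Markov decomposition at $\tau_u^+\wedge\tau_d^-$, the same use of \eqref{two_sided_exit} and \eqref{resolvent_formula} to produce the master identity \eqref{w_intermediate}, the same linear combination $w(U)-\frac{W^{(q)}(U-d)}{W^{(q)}(D-d)}w(D)$ to eliminate the common bracket, and the same back-substitution and algebraic simplification to reach $w(D)=\Theta/(q\Xi)$ and the formula in (1). The structural parallel with the $v_R$ computation that you emphasize is exactly what the paper exploits, with $\varphi_d(\,\cdot\,;f)$ in place of $Y^{(q)}(\,\cdot\,-d)$.
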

\begin{proof}
Fix $d \leq x \leq u$. Again, because the law of $\{ \tilde{A}_t;  t \leq T_u^+ \wedge T_d^- \}$ and that of $\{ X_t; t \leq \tau_u^+ \wedge \tau_d^- \}$ are the same,  the strong Markov property gives
\begin{align*}
w(x) = \mathbb{E}_x \left[e^{-q \tau_u^+} 1_{\{\tau_u^+ < \tau_d^- \}}\right] w(U) +  \mathbb{E}_x \left[e^{-q \tau_d^-} 1_{\{\tau_u^+ > \tau_d^- \}}\right] w(D) +\mathbb{E}_x \left[ \int_0^{\tau_d^- \wedge \tau_u^+} e^{-qt} f(X_t) {\rm d} t \right].
\end{align*}
By \eqref{two_sided_exit} and \eqref{resolvent_formula},
\begin{align} \label{w_intermediate}
\begin{split}
w(x)
% &= \frac {W^{(q)}(x-d)}  {W^{(q)}(u-d)} w(U) +  \left[ Z^{(q)}(x-d) -  Z^{(q)}(u-d) \frac {W^{(q)}(x-d)}  {W^{(q)}(u-d)} \right] w(D) \\ &+\varphi_d(u;f) \frac {W^{(q)}(x-d)}  {W^{(q)}(u-d)}  - \varphi_d(x;f) \\
&= \frac {W^{(q)}(x-d)}  {W^{(q)}(u-d)} \left[ w(U) - Z^{(q)}(u-d)w(D) + \varphi_d (u;f)  \right] 
+  Z^{(q)}(x-d) w(D) - \varphi_d (x;f).
\end{split}
\end{align}
In particular, by setting $x=U,D$,
\begin{align}
w(U) 
&= \frac {W^{(q)}(U-d)}  {W^{(q)}(u-d)} \left[ w(U) - Z^{(q)}(u-d)w(D) + \varphi_d (u;f)  \right] 
+  Z^{(q)}(U-d) w(D) - \varphi_d (U;f), \nonumber \\
w(D) 
&= \frac {W^{(q)}(D-d)}  {W^{(q)}(u-d)} \left[ w(U) - Z^{(q)}(u-d)w(D) + \varphi_d (u;f)  \right] 
+  Z^{(q)}(D-d) w(D) - \varphi_d (D;f). \label{w_D_quadratic_eqn}
\end{align}
Hence by computing $w(U) - w(D) {W^{(q)}(U-d)} / {W^{(q)}(D-d)} $, we obtain
%\begin{align*}
%w(U) - w(D)  \frac {W^{(q)}(U-d)}  {W^{(q)}(D-d)} 
%&= Z^{(q)}(U-d) w(D) - \varphi_d (U;f) -  \frac {W^{(q)}(U-d)}  {W^{(q)}(D-d)} (Z^{(q)}(D-d) w(D) - \varphi_d (D;f)),
%\end{align*}
%or
\begin{align*}
w(U) 
&= w(D)  \left[ Z^{(q)}(U-d)  -  q \frac {W^{(q)}(U-d)}  {W^{(q)}(D-d)} \overline{W}^{(q)}(D-d) \right] \\ &- \varphi_d (U;f)  +  \frac {W^{(q)}(U-d)}  {W^{(q)}(D-d)} \varphi_d (D;f).
\end{align*}
Substituting this in \eqref{w_D_quadratic_eqn},
\begin{align*}
w(D) 
%&= \frac {W^{(q)}(D-d)}  {W^{(q)}(u-d)}  \Big[ w(D)  \left[ \frac {W^{(q)}(U-d)}  {W^{(q)}(D-d)} +Z^{(q)}(U-d)-  \frac {W^{(q)}(U-d)}  {W^{(q)}(D-d)} Z^{(q)}(D-d) \right] \Big] \\ &+ \frac {W^{(q)}(D-d)}  {W^{(q)}(u-d)}  \Big[ - \varphi_d (U;f)  +  \frac {W^{(q)}(U-d)}  {W^{(q)}(D-d)} \varphi_d (D;f) \Big] \\ &+\frac {W^{(q)}(D-d)}  {W^{(q)}(u-d)} \left[  - Z^{(q)}(u-d)w(D) + \varphi_d (u;f)  \right] 
%+  Z^{(q)}(D-d) w(D) - \varphi_d (D;f) \\
&=   \frac {q w(D)} {W^{(q)}(u-d)}  \left[ -{W^{(q)}(D-d)} \overline{W}^{(q)}(U-d,u-d)-   {W^{(q)}(U-d)}   \overline{W}^{(q)}(D-d) \right]  \\ &+    \frac {W^{(q)}(U-d)}  {W^{(q)}(u-d)} \varphi_d (D;f)  + \frac {W^{(q)}(D-d)}  {W^{(q)}(u-d)}  (\varphi_d (u;f)  - \varphi_d (U;f) )
\\ &+  Z^{(q)}(D-d) w(D) - \varphi_d (D;f).
\end{align*}
%Hence
%\begin{align*}
%0&=  w(D)  \Big[ - \frac {W^{(q)}(D-d)}  {W^{(q)}(u-d)}  Z^{(q)}(u-d) -1 + \frac {W^{(q)}(U-d)}   {W^{(q)}(u-d)} \\ &+ \frac {W^{(q)}(D-d)}  {W^{(q)}(u-d)} Z^{(q)}(U-d)-    \frac {W^{(q)}(U-d)}  {W^{(q)}(u-d)}Z^{(q)}(D-d) +   Z^{(q)}(D-d)  \Big]  \\ & - \frac {W^{(q)}(D-d)}  {W^{(q)}(u-d)} \varphi_d (U;f)  +    \frac {W^{(q)}(U-d)}  {W^{(q)}(u-d)} \varphi_d (D;f)  + \frac {W^{(q)}(D-d)}  {W^{(q)}(u-d)}  \varphi_d (u;f)  
%- \varphi_d (D;f).
%\end{align*}
%In other words
%\begin{align*}
%0&=  w(D)  \Big[ -  {W^{(q)}(D-d)}    Z^{(q)}(u-d) - {W^{(q)}(u-d)}  +  {W^{(q)}(U-d)} \\ &+  {W^{(q)}(D-d)}  Z^{(q)}(U-d)-    {W^{(q)}(U-d)}  Z^{(q)}(D-d) +  {W^{(q)}(u-d)} Z^{(q)}(D-d)  \Big]  \\ & -  {W^{(q)}(D-d)}   \varphi_d (U;f)  +     {W^{(q)}(U-d)} \varphi_d (D;f)  \\ &+  {W^{(q)}(D-d)}   \varphi_d (u;f)  
%- {W^{(q)}(u-d)} \varphi_d (D;f),
%\end{align*}
%or
Solving this, we have
\begin{align}
w(D) &=  \frac {\Theta(d,D,U,u;f)} 
{q \Xi(d,D,U,u)}. \label{w_D}
\end{align}
Substituting this in \eqref{w_intermediate},
\begin{align*}
w(x) 
&= \frac {W^{(q)}(x-d)}  {W^{(q)}(u-d)}  \frac {\Theta(d,D,U,u;f)} 
{\Xi(d,D,U,u)}  \left[- \overline{W}^{(q)}(U-d,u-d)  -  \frac {W^{(q)}(U-d)}  {W^{(q)}(D-d)} \overline{W}^{(q)}(D-d) \right] \\ &+ \frac {W^{(q)}(x-d)}  {W^{(q)}(u-d)} \Big[ \varphi_d (u;f)  - \varphi_d (U;f)  + \frac {W^{(q)}(U-d)}  {W^{(q)}(D-d)} \varphi_d (D;f) \Big] \\
&+  Z^{(q)}(x-d) \frac {\Theta(d,D,U,u;f)} 
{q \Xi(d,D,U,u)} - \varphi_d (x;f). 
\end{align*}
In order to simplify this, note that
%\begin{multline*}
%\overline{W}^{(q)}(U-d) -  \overline{W}^{(q)}(u-d) -  \frac {W^{(q)}(U-d)}  {W^{(q)}(D-d)}  \overline{W}^{(q)}(D-d) \\
%%&= \frac 1 {W^{(q)}(D-d)}  \left[ W^{(q)}(D-d)(\overline{W}^{(q)}(U-d) -  \overline{W}^{(q)}(u-d)) -   {W^{(q)}(U-d)}  \overline{W}^{(q)}(D-d) \right] \\
%=- \frac 1 {W^{(q)}(D-d)}  \left[ \Xi(d,D,U,u) + W^{(q)}(u-d) \overline{W}^{(q)} (D-d) \right], 
%\end{multline*}
%and hence
\begin{align*}
&  \frac {\Theta(d,D,U,u;f)} 
{\Xi(d,D,U,u)} \Big[ -\overline{W}^{(q)}(U-d,u-d) -  \frac {W^{(q)}(U-d)}  {W^{(q)}(D-d)} \overline{W}^{(q)}(D-d) \Big] \\
&=- \frac 1 {W^{(q)}(D-d)}  \frac {\Theta(d,D,U,u;f)} 
{\Xi(d,D,U,u)}  \left[ \Xi(d,D,U,u) + W^{(q)}(u-d) \overline{W}^{(q)} (D-d) \right] \\
&=-  (\varphi_d (u;f) - \varphi_d (U;f)  )  + \frac { 
{W^{(q)}(U-d,u-d)} \varphi_d (D;f) } 
{W^{(q)}(D-d)} \\
&- \frac {W^{(q)}(u-d) \overline{W}^{(q)} (D-d) } {W^{(q)}(D-d)}  \frac {\Theta(d,D,U,u;f)} 
{\Xi(d,D, U,u)}. 
\end{align*}
Substituting this,
\begin{align*}
%&= \frac {W^{(q)}(x-d)}  {W^{(q)}(u-d)}  \frac {qw(D)} {W^{(q)}(D-d)}  \left[ W^{(q)}(D-d)(\overline{W}^{(q)}(U-d) -  \overline{W}^{(q)}(u-d)) -   {W^{(q)}(U-d)}  \overline{W}^{(q)}(D-d) \right] \\ &+ \frac {W^{(q)}(x-d)}  {W^{(q)}(u-d)} \Big[ (\varphi_d (u;f)  - \varphi_d (U;f) ) + \frac {W^{(q)}(U-d)}  {W^{(q)}(D-d)} \varphi_d (D;f) \Big] \\
%&+  Z^{(q)}(x-d) w(D) - \varphi_d (x;f) \\
%&= -\frac {W^{(q)}(x-d)}  {W^{(q)}(u-d)}  \frac {qw(D)} {W^{(q)}(D-d)}  \left[ \Xi(D,d,U,u) + W^{(q)}(u-d) \overline{W}^{(q)}(D-d)\right] \\ &+ \frac {W^{(q)}(x-d)}  {W^{(q)}(u-d)} \Big[ (\varphi_d (u;f)  - \varphi_d (U;f) ) + \frac {W^{(q)}(U-d)}  {W^{(q)}(D-d)} \varphi_d (D;f) \Big] \\
%&+  Z^{(q)}(x-d) w(D) - \varphi_d (x;f) \\
w(x) &= -\frac {W^{(q)}(x-d)}  {W^{(q)}(u-d)}  { \Big[    \varphi_d (u;f) - \varphi_d (U;f)  
- \frac {W^{(q)}(U-d,u-d)} {W^{(q)}(D-d)}  \varphi_d (D;f)  \Big]} 
 \\   &- {W^{(q)}(x-d)}  \frac { \overline{W}^{(q)}(D-d)} {W^{(q)}(D-d)}  \frac { \Theta(d,D,U,u;f) } 
{ \Xi(d,D, U,u)}\\ &+ \frac {W^{(q)}(x-d)}  {W^{(q)}(u-d)} \Big[ \varphi_d (u;f)  - \varphi_d (U;f) + \frac {W^{(q)}(U-d)}  {W^{(q)}(D-d)} \varphi_d (D;f) \Big] \\
&+  Z^{(q)}(x-d) \frac {\Theta(d,D,U,u;f)} 
{q \Xi(d,D, U,u)} - \varphi_d (x;f) \\
%\end{align*}
%which is simplified to 
%\begin{align*}
%&= \frac {W^{(q)}(x-d)} {W^{(q)}(D-d)}  \varphi_d (D;f)  - \varphi_d (x;f) 
% \\   &- {W^{(q)}(x-d)}  \frac { \overline{W}^{(q)}(D-d)} {W^{(q)}(D-d)}  \frac {  W^{(q)}(D-d)  (\varphi_d (u;f) - \varphi_d (U;f)  ) 
%-  ({W^{(q)}(u-d)}-     {W^{(q)}(U-d)})  \varphi_d (D;f) } 
%{ \Xi(D,d, U,u)} \\
%&+Z^{(q)}(x-d) \frac { {W^{(q)}(D-d)}   (\varphi_d (u;f) - \varphi_d (U;f)  ) 
%- ({W^{(q)}(u-d)}-     {W^{(q)}(U-d)} ) \varphi_d (D;f) } 
%{q \Xi(D,d, U,u)} \\
&= \frac {W^{(q)}(x-d)} {W^{(q)}(D-d)}  \varphi_d (D;f)  - \varphi_d (x;f) \\ &+ \left[ \frac {Z^{(q)}(x-d)} q - {W^{(q)}(x-d)}  \frac { \overline{W}^{(q)}(D-d)} {W^{(q)}(D-d)}   \right] \frac {\Theta(d,D,U,u;f)} 
{ \Xi(d,D, U,u)},
\end{align*}
which completes the proof of (1).  The proofs for (2) and (3) are also immediate by the construction of $A$ and by \eqref{w_D}.
\end{proof}

\section{Concluding Remarks}  \label{section_conclusion}

We have studied the band policy with parameters $(d,D,U,u)$ and its associated NPV's of the controlling and holding costs.  We focused on the case that is driven by a general spectrally negative L\'{e}vy process.  Using the fluctuation theory, we expressed the NPV's using the scale function.
Here, we conclude this paper with its contributions as well as challenges in applying to solve the cash management problem where one wants to minimize the total NPV of the costs over the set of impulse controls.

In a cash management problem, an admissible policy is given by a set of nondecreasing processes $\pi := \{ R^\pi, L^\pi\}$ that are $\mathbb{F}$-adapted and increase only with jumps.  The objective is to minimize the sum of holding and controlling costs given by
\begin{multline*}
V^\pi(x) := \mathbb{E}_x  \Big[ \int_0^\infty e^{-qt} f(A^\pi_t) {\rm d} t  + \sum_{0 \leq t < \infty} e^{-qt} [ c_L (\Delta L_t^\pi + k_L) 1_{\{ \Delta L_t^\pi > 0\}}  + c_R (\Delta R_t^\pi  + k_R)1_{\{ \Delta R_t^\pi > 0\}} ] \Big], 
\end{multline*}
where $c_L, c_R \in \R$ and  $A_t^\pi := X_t + R^\pi_t - L^\pi_t$ is the resulting process controlled by the policy $\pi$.

It is clear that the band policies studied in this paper are admissible, and it is naturally conjectured that, under a certain (for instance, convexity) assumption on the holding cost function $f$, the optimal strategy is given by a band policy for a suitable choice of the parameters $(d,D,U,u)$.  

From the well-known existing results on impulse control, the candidate values of $(d,D,U,u)$ are first chosen so that  the value function becomes continuous/smooth at the levels $d$ and $u$, and its slopes at $D$ and $U$ equal, respective, the negative of the unit proportional cost for $R^\pi$ and the unit proportional cost for $L^\pi$.   More precisely, if $V^*$ is the value function, it is expected to satisfy the following:
\begin{align} \label{system_equations}
\begin{split}
V^{*'}(d-) &= V^{*'}(d+), \\
V^{*'}(D) &= -c_R, \\
V^{*'}(U) &= c_L, \\
V^{*'}(u-) &= V^{*'}(u+).
\end{split}
\end{align}
Here, for the case $X$ is of bounded variation, because of irregularity of the lower half-line (see, e.g., page 142 of \cite{Kyprianou_2006}), the first smooth fit condition is replaced with the continuous fit condition: $V^{*}(d-) = V^{*}(d+)$.

Using the analytical expressions of the NPV's under the band policy, these four equations can be written concisely in terms of the scale function.  In particular,  the asymptotic behaviors of the scale function near zero as summarized in Remark \ref{remark_smoothness_zero}(2) are expected to be helpful in simplifying these. In turn, the problem reduces to identifying the four parameters $(d,D,U,u)$ as a solution to the system of four equations.  Unfortunately, however, this is likely to become a big hurdle.  Because the equations turn out to be nonlinear and somewhat complicated, even the existence/uniqueness of a solution is expected to  be difficult to show.  With regard to this, we refer the reader to  \cite{baurdoux_yamazaki_2014, Bayraktar_2013, Leung_Yamazaki_2011, Hernandez_Yamazaki_2013, Loeffen_2009_2,  Yamazaki_2013}  for simpler cases where two (instead of four) parameters are sought.

After the four parameters  $(d,D,U,u)$ that satisfy \eqref{system_equations} are identified, the last step is to verify the optimality.  This is equivalent to showing that the candidate value function solves the QVI of \cite{Bensoussan_Lions_1984}.  This is indeed the most challenging part of the problem. However, there are several benefits about having the semi-explicit expressions  written in terms of the scale function.  First, the harmonicity on $(d,u)$ can be proven easily thanks to the smoothness of the scale function and because the processes $e^{-q (t \wedge \tau_0^- \wedge \tau_b^+)} W^{(q)} (X_{t \wedge \tau_0^- \wedge \tau_b^+})$,  $e^{-q  (t \wedge \tau_0^- \wedge \tau_b^+)} Z^{(q)} (X_{t \wedge \tau_0^- \wedge \tau_b^+})$,  $e^{-q  (t \wedge \tau_0^- \wedge \tau_b^+)} (\overline{Z}^{(q)} (X_{t \wedge \tau_0^- \wedge \tau_b^+}) + \mu/q)$, $t \geq 0$, for any fixed $b > 0$ are martingales.   In addition, the property given as Remark \ref{remark_smoothness_zero}(3) has been shown to be useful in the verification as in the existing results \cite{baurdoux_yamazaki_2014, Bayraktar_2013, Leung_Yamazaki_2011,  Hernandez_Yamazaki_2013, Yamazaki_2013}.

Overall, the cash management problem of this form is conjectured to be challenging to solve.  However, the results obtained in this paper would certainly be helpful and potentially lead to an efficient way of solving the problem.

\section*{Acknowledgements}
The author thanks the anonymous referee for constructive comments and suggestions.
K.\ Yamazaki is in part supported by MEXT KAKENHI grant number 26800092, the Inamori foundation research grant, and the Kansai University subsidy for supporting young scholars 2014.

\addtolength{\baselineskip}{-0.5\baselineskip}


\begin{thebibliography}{99}

\bibitem{Avram_2007} Avram, F., Palmowski, Z. and Pistorius, M.R. (2007), 
``On the optimal dividend problem for a spectrally negative {L}\'evy process," {\it Ann. Appl. Probab.}, {\bf 17(1)}, 156--180.

\bibitem{baurdoux_yamazaki_2014} Baurdoux, E. and Yamazaki, K. (2015), 
``Optimality of doubly reflected {L}\'evy processes in singular control," {\it Stochastic Process. Appl.}, 
{\bf 125(7)}, 2727-2751.

\bibitem{Bayraktar_2013} Bayraktar, E., Kyprianou, A.E. and Yamazaki, K. (2014), ``Optimal dividends in the dual model under transaction costs,"
{\it Insurance Math. Econom.}, {\bf 54}, 
133-143.

\bibitem{Bensoussan_2009} Benkherouf, L. and Bensoussan, A. (2009), 
``Optimality of an {$(s,S)$} policy with compound {P}oisson and diffusion demands: a quasi-variational inequalities approach,"
{\it SIAM J. Control Optim.}, {\bf 48(2)}, 756--762.

\bibitem{Bensoussan_Lions_1984} Bensoussan, A. and  Lions, J.-L. (1984), 
{\it Impulse control and quasi-variational inequalities}, John Wiley \& Sons Ltd.

\bibitem{Bensoussan_2005} Bensoussan, A., Liu, R.H. and Sethi, S.P. (2005), 
``Optimality of an {$(s,S)$} policy with compound {P}oisson and diffusion demands: a quasi-variational inequalities approach," 
{\it SIAM J. Control Optim.}, {\bf 44(5)}, 1650--1676.

\bibitem{buckley1998optimal} Buckley, I. and Korn, R. (1998), 
``Optimal index tracking under transaction costs and impulse control," 
{\it Int. J. Theoretical Appl. Finance}, {\bf 1(3)}, 315--330.

\bibitem{Chan_2009} Chan, T., Kyprianou, A.E. and Savov, M. (2011), 
``Smoothness of scale functions for spectrally negative {L}\'evy processes," 
{\it Probab. Theory Relat. Fields}, {\bf 150}, 691--708.

\bibitem{constantinides1978existence} Constantinides, G.M. and Richard, S.F. (1978)
``Existence of optimal simple policies for discounted-cost inventory and cash management in continuous time," 
{\it Oper. Res.},  {\bf 26(4)}, 620--636.

\bibitem{Leung_Yamazaki_2011}Egami, M., Leung, T. and Yamazaki, K. (2013), 
``Default swap games driven by spectrally negative {L}\'{e}vy processes," {\it Stochastic Process. Appl.}, {\bf 123(2)}, 347-384.

\bibitem{MR716124}Harrison, J.M., Sellke, T.M. and Taylor, A.J. (1983), 
``Impulse control of {B}rownian motion," {\it Math. Oper. Res.}, {\bf 8(3)}, 454--466.

\bibitem{MR0469463}Harrison, J.M. and Taylor, A.J. (1977/78), 
``Optimal control of a {B}rownian storage system," {\it Stochastic Processes Appl.}, {\bf 6(2)}, 454--466.

\bibitem{Hernandez_Yamazaki_2013} Hern\'andez-Hern\'andez, D. and Yamazaki, K. (2015), 
``Games of singular control and stopping driven by spectrally one-sided {L}\'evy processes," 
{\it Stochastic Process. Appl.}, {\bf 125(1)}, 1--38.

\bibitem{Kyprianou_2006} Kyprianou, A.E. (2006), 
{\it Introductory lectures on fluctuations of {L}\'evy processes with applications}, Springer-Verlag.

\bibitem{Kyprianou_Surya_2007} Kyprianou, A.E. and Surya, B.A. (2007), 
``Principles of smooth and continuous fit in the determination of endogenous bankruptcy levels," 
{\it Finance Stoch.}, {\bf 11(1)}, 131--152.

\bibitem{Loeffen_2009_2} Loeffen, R.L. (2009), 
``An optimal dividends problem with transaction costs for spectrally negative {L}\'evy processes," 
{\it Insurance Math. Econom.}, {\bf 45(1)}, 41--48.

\bibitem{Pistorius_2003} Pistorius, M.R. (2003), 
``On doubly reflected completely asymmetric {L}\'evy processes," 
{\it Stochastic Process. Appl.}, {\bf 107(1)}, 131--143.

\bibitem{Yamazaki_2013} Yamazaki, K. (2013), 
``Inventory control for spectrally positive {L}\'{e}vy demand processes," 
{\it arXiv}, 1303.5163.

\end{thebibliography}
\end{document}